\def\hn1{H_{n+1}}
\def\dpda{\frac{\Delta p}{\Delta a}}
\def\calh{{\cal H}}
\def\hcap{\calh\cap\hn1}
\def\hcup{\calh\cup\hn1}
\def\arccot{\cot^{-1}}
\newcommand{\be}{\begin{enumerate}}
\newcommand{\ee}{\end{enumerate}}
\newcommand{\bi}{\begin{itemize}}
\newcommand{\ei}{\end{itemize}}
\newcommand{\rr}{\mathbb{R}}
\newtheorem{theorem}{Theorem}
\newtheorem{lemma}[theorem]{Lemma}
\newtheorem{corollary}[theorem]{Corollary}
\theoremstyle{definition}
\newtheorem{example}[theorem]{Example}
\theoremstyle{definition}
\newtheorem*{PAC}{Keleti's Perimeter to Area Conjecture}
\newtheorem*{gyenesexample}{Gyenes's Example}
\newtheorem*{GyenesMain}{Gyenes's Polyhedral Theorem}
\title{Bounded -- Yes, but  4?}
\author{Paul D. Humke, Cameron Marcott, Bjorn Mellem, Cole Stiegler}
\date{November 11, 2013}
\begin{document}
\maketitle
\thispagestyle{empty}

\begin{abstract}\label{sec:abstract}
In this paper we will examine unions of oriented and non-oriented unit squares in same plane and measure the ratio of perimeter to area of these unions.  In 1998, T. Keleti published the conjecture that this ratio never exceeds 4. We outline the current state of research on this conjecture and give two proofs of a special case. Finally, we explore the difficulties that arise from using similar methods in the general case and examine properties of any potential counterexample.
\end{abstract}

\section{Introduction} \label{sec:introduction}
The purpose of this paper is to introduce T\'amas Keleti's {\it{Perimeter to Area Conjecture}}, {\it
{PAC}}. This conjecture concerning unions of unit squares in the Euclidean plane, $\mathbb 
R^2$, is at the same time, easy and elementary to state, but elusive to penetrate. As with other 
such {\it elementary} statements, the fact that it has defied solution shows that we don't 
understand something quite fundamental about basic geometry in a very familiar ambient 
setting. Here we outline the current state of knowledge about the problem and  in Section \ref{sec:counterexample} use the isoperimetric inequality to provide some new insight concerning a potential counterexample. The conjecture itself is this:

\begin{PAC} \label{conj:PAC}
{\it The perimeter to area ratio of the union of finitely many unit squares in a plane does not exceed 4.}
\end{PAC}

The problem of showing that this ratio is simply bounded at all let alone by 4 first 
seems to have appeared as {\it{Problem 6}} on the famous Hungarian {\it
{Schweitzer Competition}} in 1998 \cite{S1988}. Even that is not 
completely trivial, though several Hungarian undergraduates managed a proof 
for the competition. Later that same year, Keleti published his {\it{Perimeter to 
Area Conjecture}} that this bound is actually 4. To date, the best known bound is 
slightly less than 5.6. This bound was found by Keleti's student Zolt\'an Gyenes in 
his master's thesis \cite{GT}. The {\it{PAC}} is particularly intriguing as 
some of its obvious generalizations are false. In particular, Gyenes, also in 
\cite{GT}, showed that a corresponding conjecture is false if ``square'' 
is replaced by ``convex set'' even if the union consists of two congruent convex 
sets. The example is disarmingly simple. Let $E_1$ denote a unit square centered at the origin with a 
small iscoseles triangle ``clipped'' from one corner, see Figure \ref{clipped} below.
\begin{figure}[H]\label{clipped}
\begin{center}
\scalebox{.8}{

\begin{tikzpicture}[color = gray, line width = 1pt]
\draw[fill=gray] (-2,-2)  -- (2,-2) -- (2,1.7)--(1.7,2) -- (-2,2) -- cycle;
\end{tikzpicture}
}
\end{center}
\end{figure}
 If $x$ denotes the height of that clipped triangle, then in terms of $x$,  the perimeter and area of 
 $E_1$ are respectively $p(x)=4-x(2-\sqrt{2}) \text{ and }a(x)= 1-x^2/2$. From 
 this it is easy to compute that the derivative of the perimeter to area ratio is 
 negative when $x=0$ and so for small values of $x$, the perimeter to area ratio 
 of $E_1$ is less than $4$. But if $E_2$ denotes the set obtained by rotating 
 $E_1$ by $\pi$ about the origin, then $E_1\cup E_2$ is simply the original unit 
 square whose perimeter to area ratio is obviously exactly $4$. This is the example referred to below.

\begin{gyenesexample} \label{ex:gyenes_counter_example}
{\it{There exist congruent convex sets , $E_1 \cong E_2\subset \rr^2$ such that the perimeter to area ratio  
for $E_1\cup E_2$ exceeds the perimeter to area ratio for either one of them.}}
\end{gyenesexample}

While the {\it PAC} as stated remains unresolved, a special case of it is known.

\begin{theorem} \label{thm:oriented_squares}
The perimeter to area ratio of the union of finitely many axis oriented unit squares in a plane does not exceed 4.
\end{theorem}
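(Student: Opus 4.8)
The plan is to estimate the horizontal and vertical parts of the boundary separately and show each is at most twice the area. Write $S=\bigcup_{i=1}^{n}Q_i$ for the union of the axis oriented unit squares. Being a finite union of closed axis-parallel boxes, $S$ is a regular closed polygonal region (possibly with holes), and its boundary $\partial S$ is a finite union of horizontal and vertical segments. Let $P_h$ and $P_v$ be the total length of the horizontal, respectively vertical, edges of $\partial S$, so the perimeter is $P=P_h+P_v$, and let $a$ be the area of $S$. Since the hypothesis is symmetric under interchanging the two coordinate axes, it suffices to prove $P_v\le 2a$; the symmetric inequality $P_h\le 2a$ then gives $P\le 4a$.

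To bound $P_v$ I would slice by horizontal lines. For $y\in\R$ let $S_y=\{x\in\R:(x,y)\in S\}$ be the horizontal fiber, and let $m(y)$ be the number of maximal intervals comprising $S_y$. A vertical edge of $\partial S$ of length $\ell$ spanning heights $(y_0,y_0+\ell)$ meets the horizontal line at height $y$ for each $y$ in that open interval, so integrating over $y$ gives $P_v=\int_{\R}N(y)\,dy$, where $N(y)$ is the number of vertical edges of $\partial S$ crossing height $y$. For all but the finitely many heights $y$ that coincide with a horizontal edge of $\partial S$ or with an endpoint of a vertical edge, $S_y$ is a disjoint union of $m(y)$ nondegenerate closed intervals, and because $S$ is regular closed each such interval contributes exactly its two endpoints as vertical boundary edges at height $y$; hence $N(y)=2\,m(y)$ for a.e.\ $y$, and $P_v=2\int_{\R}m(y)\,dy$.

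The geometric heart of the argument — and the step I expect to carry the real content — is the pointwise bound $|S_y|\ge m(y)$ for a.e.\ $y$, i.e.\ that every maximal interval of a fiber has length at least $1$. This is exactly where the unit-square hypothesis is used: any point $(x,y)\in S$ lies in some $Q_i=[c_i,c_i+1]\times[d_i,d_i+1]$ with $d_i\le y\le d_i+1$, and then the whole horizontal segment $[c_i,c_i+1]\times\{y\}$ lies in $Q_i\subseteq S$, so the maximal interval of $S_y$ through $x$ has length $\ge 1$. Summing over the $m(y)$ disjoint intervals of $S_y$ yields $|S_y|\ge m(y)$. Combining with the previous paragraph and Fubini's theorem, $P_v=2\int_{\R}m(y)\,dy\le 2\int_{\R}|S_y|\,dy=2a$, and symmetrically $P_h\le 2a$, so $P=P_h+P_v\le 4a$, which is the assertion.

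The main obstacle is not a single hard estimate but the bookkeeping needed to make the slicing rigorous: one must justify that the perimeter of $S$ really is the total edge length of the polygon $\partial S$ (so that holes are counted correctly), verify the Cauchy--Crofton-type identity $P_v=\int N(y)\,dy$ together with the relation $N(y)=2m(y)$ away from the finite exceptional set of heights, and deal with degenerate configurations where two squares abut and fibers merge. Once this set-up is in place, the unit-square hypothesis does all the work through the single inequality $|S_y|\ge m(y)$, and one sees moreover exactly when equality $P=4a$ can occur (each fiber, in each coordinate direction, must be a single interval of length exactly $1$ wherever it is nonempty), which is consistent with a single unit square.
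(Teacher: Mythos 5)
Your argument is correct, and it is a genuinely different route from any of the three proofs in the paper. You split the perimeter as $p=P_h+P_v$ and prove $P_v\le 2a$ by a global slicing (Cauchy--Crofton/Fubini) argument: at almost every height $y$ the number of vertical boundary edges crossed is $2m(y)$, while each of the $m(y)$ maximal fiber intervals has length at least $1$ because every point of the union lies in an axis-oriented unit square whose horizontal section at that height is a full unit interval; integrating gives $P_v=2\int m(y)\,dy\le 2\int |S_y|\,dy=2a$, and symmetry finishes. The bookkeeping you flag (perimeter equals total edge length of $\partial S$ including hole boundaries, $N(y)=2m(y)$ off a finite exceptional set, abutting squares) is routine for a finite union of closed squares, since such a union is regular closed and its boundary is a finite union of axis-parallel segments, so the proof is complete in substance. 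By contrast, the paper's first proof (Gyenes) works from the boundary inward: it partitions $\partial\calh$ into segments lying on single squares, bounds $a(\calh)$ below by the areas of strips pushed into the ambient square, and averages over the four cardinal directions using $\sum_\theta l_{p,H_i}(\theta)=1$; its other two proofs are inductive, showing $\frac{\Delta p}{\Delta a}\le 4$ when one square is added (via a rectangle ``bump-out'' lemma, or via the $P_0,P_1,P_2$ classification of boundary segments with inclusion/exclusion). Your dual, fiber-side version is non-inductive, avoids case analysis, and makes the equality case transparent; the paper's averaging formulation is the one that generalizes (by averaging over all directions $\theta\in[0,2\pi)$) to the $5.6$ bound for non-oriented squares, and the inductive $\frac{\Delta p}{\Delta a}$ viewpoint is what the paper later exploits to constrain optimal counterexamples, even though, as the paper notes, it breaks down for non-oriented squares.
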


Gyenes gives a proof of this theorem in \cite{GT}. In Section \ref
{sec:state_of_conjecture} we 
outline Gyenes's proof of Theorem \ref{thm:oriented_squares} and show how he obtains 
the bound of 5.6 for the 
general case. Two additional elementary proofs for Theorem \ref{thm:oriented_squares} are 
given in Sections \ref{sec:bump_method} and \ref{sec:boundary_strips}. 
In Section \ref{sec:centered_squares} we examine the special case of non-
oriented squares centered at the 
same point.  In Section \ref{sec:counterexample} we use the isoperimetric 
inequality to find strict conditions which any optimal counterexample must 
satisfy.  

\section{Notation} \label{sec:notation}
\noindent
Throughout this paper, $\calh = \bigcup_{i=1}^{n} H_i$ will be the union of finitely 
many unit 
squares $H_i$ in $\rr^2$. By a {\it vertex} of $\calh$ we mean a point $p$ on the 
boundary of 
$\calh$ which lies on the boundary of more than one of the $H_i$. The perimeter 
function 
$p(\cdot)$ 
takes a closed, bounded polygonal figure in the plane as input and returns that figure's 
perimeter.  The area function $a(\cdot)$ takes a closed, 
bounded polygonal figure in the plane as input and returns that figure's area.  The value $
\Delta p$ refers to the change in perimeter under a 
given action (adding a square, removing a square, moving a square, etc.).  When adding a 
square $H_{n+1}$, $\Delta p = p(H \cup H_{n+1}) - p(H)$.  
When subtracting the square $H_n$, that is $\Delta p = p(H) - p(H \setminus H_n )$, the 
function  $\Delta a$ is defined analogously.  Finally, if 
$A$ is a region in $\mathbb R^2$, then $\partial A$ will denote the  boundary of $A$. Other 
notation will be defined as needed.

\section{Gyenes's {\em PAC} Results} \label{sec:state_of_conjecture}
\noindent
That the perimeter--area ratio for squares is  bounded by 5.6 and that the bound is  exactly 4 in the special 
case of axis oriented squares both follow from a general theorem obtained by Gyenes in \cite{GT} 
on the surface-area to volume ratio  of the union of finitely many copies of a fixed set in $\rr^n$. Without 
giving details or even complete background definitions, we state this result below to highlight the nature of 
the general theorem. Here, $T_{A,\mu}$ is a measure of the thinness of the set $A$ as measured via a fixed 
probability measure $\mu$; see \cite[Section 2]{GT} for details.

\begin{GyenesMain} {\it If $\cal H$ is the union of a finite set $H_i$ of congruent polyhedra in $\rr^n$, then for any fixed probability measure $\mu$,  the ratio of the surface-area of  $\cal H$ to the volume of $\cal H$ does not exceed $\frac{1}{T_\mu}$, where $T_\mu$ is the infimum of the set $\{T_{A,\mu}\; :\; A\in H_i \}$.}
\end{GyenesMain}

Stripping the broader theorem of its generality, we first present Gyenes's proof of 
Theorem \ref{thm:oriented_squares} and we then show how his 5.6 bound is 
obtained for non-oriented squares.

\begin{proof}[Gyenes's Proof of Theorem \ref{thm:oriented_squares}] \label{proof:gyenes_oriented}
Let $\{H_i\;:\; i=1,2,\dots n\}$ be a finite collection of unit squares in $\rr^2$, suppose that the edges of each 
$H_i$ are either vertical or horizontal, 
and set ${\cal H}=\cup_{i=1}^n H_i$.  To begin, fix $i$ and a non-vertex  point $p\in\partial H_i$.  Let
$\Theta\equiv  \left\{0,\frac{\pi}{2},\pi,\frac{3\pi}{2}\right\}$, For each $\theta\in\Theta$, let $l_{p,H_i}(\theta)$ 
denote the length of the 
line segment in the interior of $H_i$ that begins at $p$ and is in direction $\theta$.  In the case of a single 
oriented unit square, this length is 1 in the 
direction that goes directly across the square and 0 in the other three cardinal directions.
Hence, for a fixed square $H_i$ and fixed non-vertex point $p \in \partial H$, the 
sum of $l_{p,H_i}(\theta)$ over the four cardinal directions is simply
\begin{equation}\label{cardsum}
\sum_{\theta\in\Theta} l_{p,H_i}\left(\theta\right) = 1.
\end{equation}

Now, consider the entire figure $\cal H$ and partition $\partial\cal H$ into finitely many line segments $s_j$ such that 
\begin{enumerate}
\item each $s_j$ is contained entirely in the boundary a single $H_i\equiv H_{i(j)}$, 
\item the $s_j$'s are disjoint except for possibly at their endpoints, and
\item $\bigcup_j s_j = \partial H$. 
\end{enumerate}
Let $\left|s_j\right|$ denote the length of segment $s_j$ and fix $\theta\in\Theta$. 
We bound 
the area of $\cal H$ below as the sum of the areas of the rectangular strips having one 
side $s_j$ (and the other side either 1 or 0), to obtain:

$$
a({\cal H})\ge \sum_j \left|s_j\right| l_{M_j, H_i}(\theta),
$$
where $M_j$ is the midpoint of $s_j$ and $H_i=H_{i(j)}$ is the square 
with $s_j \subset \partial H_i$.

\medskip
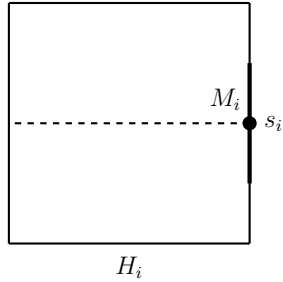
\begin{figure}[H]
\begin{center}
\scalebox{.8}{
$$
\begin{tikzpicture}[color = black, line width = 1pt]
\draw (-2,-2) -- (-2,2);
\draw (-2,2) -- (2,2);
\draw (2,2) -- (2,-2);
\draw (2,-2) -- (-2,-2);

\draw[color = black, line width = 2pt] (2,-1) -- (2,1);
\draw[dashed] (2,0) -- (-2,0);
\draw[color = black, fill = black] (2,0) circle (.1cm);
\draw[color = black] (2.4,0) node {\large{$s_i$}};
\draw[color = black] (1.6,.4) node {\large{$M_i$}};
\draw (0,-2.4) node {\large{$H_i$}};
\end{tikzpicture}
$$
}
\caption{The Geometry in $H_i$ with $\theta=\frac{3\pi}{2}$}
\label{fig:geometry_in_H}
\end{center}
\end{figure}

\noindent
Averaging across the four cardinal directions and using (\ref{cardsum}) then yields:

\begin{equation*}
a({\cal H})\ge\frac{1}{4}\sum_{\theta\in\Theta} \sum_j \left|s_j\right| l_{M_j, H_i}(\theta)
=\frac{1}{4} \sum_j \left|s_j\right|\sum_{\theta\in\Theta} l_{M_j,H_i}(\theta)= \frac{1}{4} \sum_i \left|s_i\right|.
\end{equation*}
Hence, $\frac{1}{4} \sum_i \left|s_i\right| \leq a(H)$.  However, $\sum_i\left|s_i\right|$ is exactly $p({\cal H})$ and therefore, $\frac{1}{4} p({\cal H}) \leq a({\cal H})$ and $\frac{p({\cal H})}{a({\cal H})} \leq 4$, as desired.
\end{proof}

Gyenes obtains his bound of 5.6 for the case of non-oriented squares using an area finding integral and Fubini's Theorem. Again, we present the Gyenes proof, but restrict the scope to the squares in $\rr^2$

\begin{proof}[The Gyenes Bound]
Let $\{H_i\;:\; i=1,2,\dots n\}$ and ${\cal H}=\cup_{i=1}^n H_i$ be as in the previous proof and let $\theta\in[0,2\pi)$ be fixed. Define the {\it thickness} in the direction $\theta \in [0,2\pi)$ at a  point $p \in \partial \cal H$  to be

$$
\tau(p,\theta) := l_{p,H}(\theta)\sin(\phi)
$$

\noindent
where $l_{p,\calh}(\theta)$ is the length of the line segment transversing the interior of $\calh$ that begins at $p$ and is in direction $\theta$. The angle 
$\phi=\phi(p,\theta)$ is the smaller of the two angles that this line segment  makes with $\partial \calh$ at $p$ and is well defined except at vertices of $\cal H$. See Figure \ref{fig:thickness_at_point}.

\begin{figure}[H] 
\begin{center}
\scalebox{.8}{
$$\begin{tikzpicture}[color = black, line width = 1pt]

\draw[black, shift={(-1.5 cm,2.1 cm)},rotate=-20] (-2,-2) rectangle (2,2); 
\draw[black, rotate around={0:(0,0)}] (-2,-2) rectangle (2,2); 

\draw[dashed] (2,-1.4641) -- (-1.75,4.3);

\draw[color = black, fill = black] (2,-1.4641) circle (.1cm);

\node at (2.4,-1.4641) {$p$};
\node at (-.2,3) {$ l_{p,H}(\theta)$};

\draw[dashed, ->] (5.5,-2) -- (4.5,-.26795);

\draw (2,-.7641) arc (90:120:.7);
\draw[<->] (4.5,-2) -- (7,-2);
\draw (6,-2) arc (0:120:.5);

\node at (1.75,-.5) {$\phi$};
\node at (5.85,-1.3) {$\theta$};

\end{tikzpicture}$$}
\caption{Thickness at a Boundary Point}
\label{fig:thickness_at_point}
\end{center}
\end{figure}
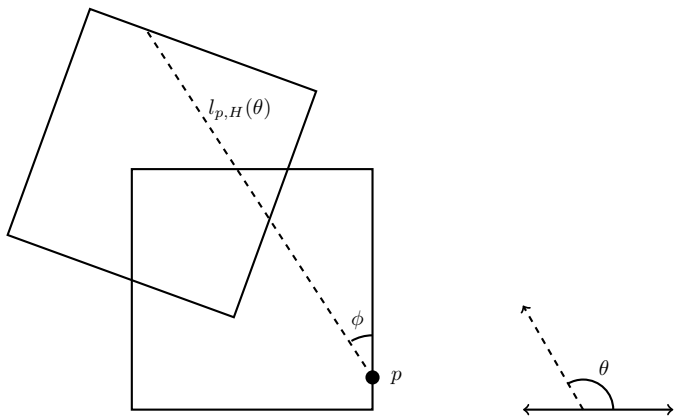

As $\calh$ is the finite union of unit squares, it follows that for a fixed $\theta$, the function $ l_{p,H}(\theta)$ is well defined on $\partial\calh$ and  is piecewise linear. Hence, $\tau(p,\theta)$ is well defined and piecewise linear except at the vertices of $\calh$. The projection of $\calh$ onto a line of direction $\theta+\frac{\pi}{2}$ is a finite union of non-degenerate closed intervals, and  hence, the area of $\calh$, $a(\calh)$ is the (Riemann) integral of $ l_{p,H}(\theta)$ over that projection.  Changing variables then to integrate around the boundary of $\calh$, we conclude that for each fixed $\theta\in[0,2\pi)$

$$
a(\calh)=\int_{p\in\partial H} \tau(p,\theta)\, ds.
$$
where $s$ is the arc-length parameterization of $\partial\calh$. Averaging over $[0,2\pi]$ and applying Fubini yields

\begin{align}\label{eqn:fubini}
a(H) &=\frac{1}{2\pi} \int_0^{2\pi}\int_{p\in\partial H} \tau(p,\theta)\, ds\, d\theta \notag \\
&=\frac{1}{2\pi} \int_{p\in\partial H}\int_0^{2\pi} \tau(p,\theta)\, d\theta\, ds.
\end{align}

Each non-vertex point, $p \in \partial\calh$ is also on the boundary of a unique component unit 
square, $H_{i(p)}$.  We let $l^*_{p,\calh}(\theta)=l_{p,H_{i(p)}}(\theta) \leq l_{p,\calh}(\theta)$ 
and $\tau(p,\theta) := l^*_{p,H}(\theta)\sin(\phi)$. Since $\sin(\phi)\ge0$, it follows that 
$\tau(p,\theta) \ge  \tau^*(p,\theta)$ for all non-vertex points $p\in\partial\calh$. See Figure \ref{fig:less_thickness_at_point}.

\begin{figure}[H] 
\begin{center}
\scalebox{.8}{
$$\begin{tikzpicture}[color = black, line width = 1pt]

\draw[black, shift={(-1.5 cm,2.1 cm)},rotate=-20] (-2,-2) rectangle (2,2); 
\draw[black, rotate around={0:(0,0)}] (-2,-2) rectangle (2,2); 

\draw[dashed] (2,-1.4641) -- (-.34,2);

\draw[color = black, fill = black] (2,-1.4641) circle (.1cm);

\node at (2.4,-1.4641) {$p$};
\node at (1.2,.8) {$ l^*_{p,H}(\theta)$};

\draw[dashed, ->] (5.5,-2) -- (4.5,-.26795);

\draw (2,-.7641) arc (90:120:.7);
\draw[<->] (4.5,-2) -- (7,-2);
\draw (6,-2) arc (0:120:.5);

\node at (1.75,-.5) {$\phi$};
\node at (5.85,-1.3) {$\theta$};

\end{tikzpicture}$$}
\caption{Less Thickness at a Boundary Point}
\label{fig:less_thickness_at_point}
\end{center}
\end{figure}

\noindent 
Now let  $T^*=\inf \int_0^{2\pi} \tau^*(p,\theta)\, d\theta $ 
where the $\inf$ is taken over the non-vertex points of $H_{i(p)}$. Then, 
$T^*$ is independent of $p$ (or $i(p)$) and so combining the 
inequality $\tau\ge\tau^*$ with the definition of $T^*$ and equation  (\ref{eqn:fubini}) we obtain:
\begin{align}
a(H) &=\frac{1}{2\pi} \int_{p\in\partial \calh}\int_0^{2\pi} \tau(p,\theta)\, d\theta\, ds \notag \\
&\ge\frac{1}{2\pi} \int_{p\in\partial \calh}\int_0^{2\pi} \tau^*(p,\theta)\, d\theta\, ds \notag \\
&\ge \frac{1}{2\pi} \int_{p\in\partial \calh} T^*=\frac{T^*}{2\pi}\cdot p(\calh). 
\end{align}
Or,
\begin{equation}
\frac{p(\calh)}{a(\calh)}\le \frac{2\pi}{T^*}.
\end{equation}
Finding $T^*$ is just a matter of computation. First,
$$
l^*(p,\theta) =
\begin{cases} 
(1-x) \sec(\theta) & 0 \leq \theta \leq \arccot (1-x)\\
\csc(\theta) &  \arccot (1-x)\le\theta\le\pi-\arccot(x)\\
-x\sec(\theta) & \pi-\arccot(x)\le\theta\le\pi\\
0 & \pi \leq x < 2\pi.
\end{cases}
$$
See Figure \ref{fig:l*(p,t)}.

\begin{figure}[H] 
\begin{center}
\scalebox{}{
$$\begin{tikzpicture}[color = black, line width = 1pt]

\draw[black] (-2,-2) rectangle (2,2); 

\node at (-1,-2.4) {$p=(x,0)$};
\draw[color = black, fill = black] (-1,-2) circle (.1cm);
\node at (3.6,-1.2) {$\left(1,(1-x)\tan(\theta)\right)$};
\draw[color = black, fill = black] (2,-1) circle (.1cm);
\draw[<->] (-1,-2)--(2,-1);
\node at (1,2.3) {$\left(x+\cot(\theta),1\right)$};
\draw[color = black, fill = black] (1,2) circle (.1cm);
\draw[<->] (-1,-2)--(1,2);
\node at (-3.3,-1) {$\left(0,-x\tan(\theta)\right)$};
\draw[color = black, fill = black] (-2,-1) circle (.1cm);
\draw[<->] (-1,-2)--(-2,-1);

\draw[dashed] (-1,-2) -- (2,2);
\draw[dashed] (-1,-2) -- (-2,2);

\end{tikzpicture}$$}
\caption{Computing $l^*(p,\theta)$}
\label{fig:l*(p,t)}
\end{center}
\end{figure}

In this normalized setting $\theta=\phi$ so that

$$
\tau^*(p,\theta) =
\begin{cases} 
(1-x) \tan(\theta) & 0 \leq \theta \leq \arccot (1-x)\\
1 &  \arccot (1-x)\le\theta\le\pi-\arccot(x)\\
-x\tan(\theta) & \pi-\arccot(x)\le\theta\le\pi\\
0 & \pi \leq x < 2\pi,
\end{cases}
$$
and hence,

$$
\int_0^{2\pi} \tau^*(p,\theta)\, d\theta\ = 
\frac{\ln(1+(1-x)^2)}{2}-\ln(1-x)+\pi-\arccot(1-x)-\arccot(x).
$$

This function is increasing over $[0,1]$ so that the minimum is $\frac{2\pi}{\frac{1}{2}\ln 2 + \frac{\pi}{4}}$ occurring at $x = 0$. Hence, $\frac{p(\calh)}{a(\calh)} \leq \frac{2\pi}{\frac{1}{2}\ln 2 + \frac{\pi}{4}} \leq 5.6$.
\end{proof}
Details of the more general theorem can be found in \cite{GT}.

\section{The Bump Method}\label{sec:bump_method}

In this section we present the first of two additional proofs of Theorem \ref{thm:oriented_squares}. This proof relies on the following elementary fact about rectangles.

\begin{lemma}\label{rectangleineq} 
Let $R\subset[0,1]^2$ be a rectangle.Then $\frac{p(R)}{a(R)}\ge4$.
\end{lemma}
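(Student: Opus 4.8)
The plan is to reduce the inequality to an elementary two-variable estimate. Write $a$ and $b$ for the side lengths of $R$, so that $p(R)=2(a+b)$ and $a(R)=ab$, and the assertion $\frac{p(R)}{a(R)}\ge 4$ becomes $a+b\ge 2ab$. The only geometric input needed is a constraint forcing $a$ and $b$ to be small, and this should come entirely from the hypothesis $R\subseteq[0,1]^2$.

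First I would observe that the diameter of $R$, namely the length of its diagonal $\sqrt{a^2+b^2}$, cannot exceed the diameter $\sqrt 2$ of $[0,1]^2$, since any two points of $R$ lie in $[0,1]^2$. Hence $a^2+b^2\le 2$. (If one only cares about axis-parallel $R$ the even more direct bound $a,b\le 1$ is available, and then $\frac{2(a+b)}{ab}=\frac2a+\frac2b\ge 4$ at once; the diameter comparison is precisely what lets the same argument cover tilted rectangles.)

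From $a^2+b^2\le 2$ together with AM--GM I get $ab\le\tfrac12(a^2+b^2)\le 1$, while AM--GM also gives $a+b\ge 2\sqrt{ab}$. Combining these,
\[
\frac{p(R)}{a(R)}=\frac{2(a+b)}{ab}\ge\frac{4\sqrt{ab}}{ab}=\frac{4}{\sqrt{ab}}\ge 4,
\]
which is the claim, with equality exactly when $ab=1$ and $a=b$, i.e. when $R=[0,1]^2$.

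There is no serious obstacle here; the only point that deserves care is justifying the constraint on $(a,b)$ without getting mired in case analysis about how $R$ is positioned or tilted inside the unit square, and the one-line diameter comparison does exactly that.
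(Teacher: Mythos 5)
Your proof is correct, and it actually proves a bit more than the paper's own argument does. The paper's proof simply notes that $R\subset[0,1]^2$ forces the side lengths to satisfy $a,b\le 1$, and then rearranges $a(b-1)+b(a-1)\le 0$ into $a+b\ge 2ab$, which is your inequality obtained ``at once'' in your parenthetical remark; this is exactly the axis-parallel observation you mention, and it suffices for the paper because the lemma is only ever invoked for axis-aligned rectangles produced by the bumping construction. Your main route --- bounding the diagonal by the diameter $\sqrt2$ of the unit square, deducing $a^2+b^2\le 2$, and then combining $ab\le\tfrac12(a^2+b^2)\le 1$ with $a+b\ge 2\sqrt{ab}$ --- buys genuine extra generality: a tilted rectangle inside $[0,1]^2$ can have a side length exceeding $1$ (a thin rectangle along the diagonal has length close to $\sqrt2$), so the paper's step ``$R\subset[0,1]^2$ implies $a,b\le1$'' is not literally valid for arbitrary orientations, whereas your diameter comparison handles the lemma exactly as stated, with no case analysis on position or tilt. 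The trade-off is only one of economy: the paper's two-line version is shorter and is all that its application requires, while yours is marginally longer but covers every rectangle contained in the unit square and identifies the equality case $R=[0,1]^2$.
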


\begin{proof}
If $R$ is an ``a" by ``b"" rectangle, then as $R\subset[0,1]^2$, $0\le a,b\le 1$. Hence, 
$a(b-1)+b(a-1)\le 0$, and the desired inequality immediately follows.
\end{proof}

\begin{proof}[Proof of Theorem \ref{thm:oriented_squares} - The Bump Method]

Suppose that $\calh=\cup_{i=1}^nH_i$ where $H_i$ is an oriented unit square 
(vertical and horizontal edges) for $i=1,2,\dots n$. We induct on the number of 
squares in $\calh$. Clearly, $\frac{p(\calh)}{a(\calh)} = 4$ if $\calh$ is a single 
square. Suppose that $\calh$ consists of $n$ squares and that $\frac{p(\calh)}{a
(\calh)} \leq 4$.  We show that adding another square will not cause the ratio to 
exceed 4.  That is, no matter how the square $H_{n+1}$ is added to the figure,

$$\frac{p(\calh \cup H_{n+1})}{a(\calh \cup H_{n+1})}\leq 4$$

\noindent Note that
$$
\frac{p(\calh \cup H_{n+1})}{a(\calh \cup H_{n+1})} = \frac{p(\calh)+\Delta p}{a(\calh)+\Delta a},
$$
and because $\frac{p(\calh)}{a(\calh)} \leq 4$, it is sufficient to show that 
$\frac{\Delta p}{\Delta a} \leq 4$. To calculate $\frac{\Delta p}{\Delta a}$, one
only needs to examine how $H_{n+1}$ intersects the original figure. We identify at most four 
disjoint rectangles in $\hn1$ that enable us to use Lemma \ref{rectangleineq} 
to finish the proof.

If $\calh\cap \hn1=\emptyset$ (or is a single point), then $\frac{\Delta p}{\Delta a}=\frac{p(\hn1)}{a(\hn1)}=4$ and we are finished.

Hence, we may assume that $\calh\cap \text{interior}(\hn1)\not=\emptyset.$  Since $
\calh$ is comprised of oriented unit squares, and $\calh\cap\hn1\not=\emptyset$ 
it follows that $\text{interior}(\calh)$ contains at least one vertex of $\hn1$.  There 
are several cases depending on the 
number of vertices of $\hn1$ that are in $\calh$.

\begin{enumerate}
\item[Case 1] $\hcap$ contains exactly one vertex of $\hn1$.

We suppose that the bottom left vertex of $\hn1$, 
denoted $v$ is at the origin and that $v\in\calh$.   
Suppose $(x_r,y_r)$ and $(x_s,y_s)$ are the respective rightmost and topmost points of 
$\calh \cap H_{n+1}$.

\smallskip
We ``bump out" $\calh\cap\hn1$ by replacing $\calh\cap\hn1$  with the rectangle $RB$  
whose vertices are $v=(0,0),\ (x_r,0),\ (0,y_s),\ \text{ and }(x_r,y_s)$. See Figure \ref{fig:bumping_out}.

\begin{figure}[H]
$$
\scalebox{.6}{
\begin{tikzpicture}[color = black, line width = 1pt]

\draw (-2,-2) rectangle (4,4);
\node at (3.4,4.5) {\Large{$H_{n+1}$}};

\draw[pattern=north west lines] (0,2) rectangle (3,0);
\draw[dashed] (0,2) -- (3,2);
\draw[dashed] (3,2) -- (3,0);

\draw (0,0) -- (0,2);
\draw (0,0) -- (3,0);
\draw (0,2) -- (-4,2);
\draw (3,0) -- (3,-4);
\draw[color = black, fill = black] (-2,2) circle (.1cm);
\draw[color = black, fill = black] (3,-2) circle (.1cm);
\node at (-4.5,2) {\Large{$\calh$}};
\node at (-1.4,2.4) {\Large$(0,y_s)$};
\node at (2.4,-1.6) {\Large$(x_r,0)$};

\end{tikzpicture}
}
$$
\caption{``Bumping Out'' within $H_{n+1}$}
\label{fig:bumping_out}
\end{figure}
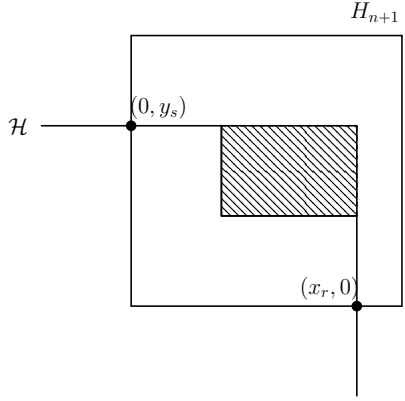

For notational simplicity, denote the ``stair-step" region $\hcap$ by $SS$. Denote the 
portion of the $\partial (SS)$ which also lies on the boundary of $\hn1$ by $\gamma_1$ 
and the remainder of $\partial (SS)$ by$\gamma_2$. Then,

\begin{align*}
\dpda &=\frac{p(\hcup)-p(\calh)}{a(\hcup)-a(\calh)} = 
\frac{\big(p(\calh)-\ell(\gamma_2)+4-\ell(\gamma_1)\big)-p(\calh)}{\big(a(\calh)-a(SS)+1\big)-a(\calh)}\\
&=\frac{4-\ell(\gamma_1)-\ell(\gamma_2)}{1-a(SS)}
=\frac{4-p(RB)}{1-a(SS)}
\le\frac{4-p(RB)}{1-a(RB)}
\end{align*}
By Lemma \ref{rectangleineq}, $\frac{p(RB)}{a(RB)}\ge 4$ and hence, $\frac{4-p(RB)}{1-a(RB)}\le 4$ completing the proof.

\item[Case 2] $\hcap$ contains exactly two vertices, $v_1$ and $v_2$ of $\hn1$.

We first ``bump out" $\hcap$ as 
in Case 1, but this time we obtain two rectanglular regions, $RB_1$ at $v_1$ 
and $RB_2$ at $v_2$. If $RB_1\cap RB_2=\emptyset$, then using analogous estimates as in Case 1 we find that
$$
\dpda =\frac{p(\hcup)-p(\calh)}{a(\hcup)-a(\calh)} 
\le\frac{4-p(RB_1)-p(RB_2)}{1-a(RB_1)-a(RB_2)}.
$$
Now, from Lemma \ref{rectangleineq} we know that both $\frac{p(RB_1)}{a
(RB_1)}\ge 4$ and $\frac{p(RB_2)}{a(RB_2)}\ge 4$ and hence, it follows directly 
that $\frac{4-p(RB_1)-p(RB_2)}{1-a(RB_1)-a(RB_2)}\le4$ as desired. 

\smallskip
If  $RB_1\cap RB_2\not=\emptyset$, there are two subcases to consider 
depending on whether $v_1$ and $v_2$ are adjacent or diagonally opposite one 
another. 

\smallskip
In the case that $v_1$ and $v_2$ are adjacent and $RB_1$ overlaps $RB_2$, we ``bump out" the region $RB_1\cup RB_2$ to the smallest oriented rectangle 
containing $RB_1\cup RB_2$ and denote that rectangle by $RB^*$. See Figure \ref{bumpout2}. The computation we did in Case 1 now applies to this situation and we compute that
$$
\dpda =\frac{p(\hcup)-p(\calh)}{a(\hcup)-a(\calh)} 
\le\frac{4-p(RB^*)}{1-a(RB^*)}.
$$
An application of Lemma \ref{rectangleineq} completes this case. 

\begin{figure}[H]
$$
\scalebox{.6}{
\begin{tikzpicture}[color = black, line width = 1pt]

\draw[black] (-4,-4) rectangle (4,4); 
\node at (3.4,4.5) {\Large{$H_{n+1}$}};

\draw[pattern=north west lines] (0,2) rectangle (4,0);
\draw[dashed] (0,2) -- (4,2);

\draw (0,-5) -- (0,2);
\draw (-3,0) -- (5,0);
\draw (0,2) -- (-5,2);
\draw (-3,0) -- (-3,-5);
\end{tikzpicture}
}
$$
\caption{Another ``Bumping Out'' within $H_{n+1}$}
\label{bumpout2}
\end{figure}
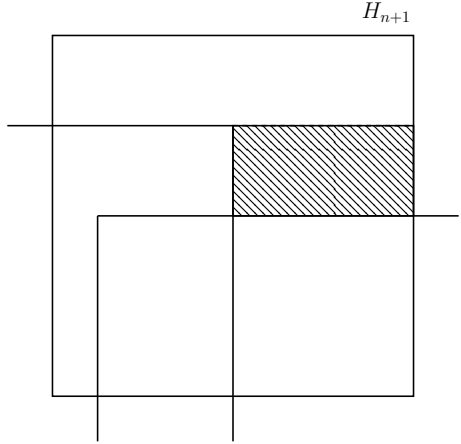

\smallskip
Finally, if $v_1$ and $v_2$ are diagonally opposite and $RB_1$ overlaps $RB_2$, then $\Delta p \leq 0$ so that the conclusion trivially holds. 

\end{enumerate}
This, then completes Case 2. The cases where $\hcup$ contains exactly three or exactly four points are completely analogous, and hence this also completes the ``Bump Method" proof of Theorem \ref{thm:oriented_squares}. 
\end{proof}

\section{The Method of Boundary Strips} \label{sec:boundary_strips}

\begin{proof}[Proof 3 of Theorem \ref{thm:oriented_squares} - Boundary Strips]
This proof is similar to Gyenes's in Section \ref{sec:state_of_conjecture} in that 
we add up the area of strips along the boundary. However, we apply the idea 
inductively, again showing that $\frac{\Delta p}{\Delta a} \leq 4$ when adding a 
square to $\calh$.

To begin, we subdivide the boundary of $H_{n+1}$ into (necessarily finitely many) 
nonoverlapping line segments of three types.  
\begin{enumerate}
\item[$P_0$]

These segments (possibly degenerate) are maximal subsegments of $\partial H_{n+1} \cap \calh$. Two may intersect, but only at a vertex of $\hn1$. 
 
\item[$P_1$]
 
A maximal segment $S\subset\partial H_{n+1} \setminus \calh$ is a $P_1$ segment if any
line that intersects $S$ and is orthogonal to $S$ also intersects $\hcap$.

\item[$P_2$]

These are the remaining maximal segments  $S\subset\partial H_{n+1} \setminus \calh$
and are characterized by the property that any line that intersects such a segment and is orthogonal 
to it misses $\hcap$.
\end{enumerate}

Note that the mirror image of a $P_2$ segment is another $P_2$ segment, while the mirror image of  a $P_1$ segment is always contained in a $P_2$ segment. The mirror image of a point in a $P_0$ segment can be either a $P_0$ point or a $P_1$ point. See Figure \ref{bseg}.

\begin{figure}[H]\label{bseg}
\begin{center}
\scalebox{.6}{
$$
$$
\begin{tikzpicture}[color = black, line width = 1pt]

\draw[black] (-2,-2) rectangle (2,2); 
\node at (0,0) {\large{$H_{n+1}$}};

\draw (-3,1) -- (-1,1);
\draw (-1,1) -- (-1,-1);
\draw (-1,-1) -- (1,-1);
\draw (1,-1) -- (1,-3);
\node at (-3,.5) {\Large{$\calh$}};

\draw[color = black, line width = 2pt] (-2,-2) -- (-2,1);
\draw[color = black, line width = 2pt] (-2,-2) -- (1,-2);
\node at (0,-2.5) {{$P_0$}};
\node at (-2.5,-.5) {{$P_0$}};
\draw[color = black, fill = black] (-2,-2) circle (.1cm);
\draw[color = black, fill = black] (-2,1) circle (.1cm);
\draw[color = black, fill = black] (1,-2) circle (.1cm);

\end{tikzpicture}
$$$$
\hspace{5mm}
$$
\begin{tikzpicture}[color = black, line width = 1pt]

\draw[black] (-2,-2) rectangle (2,2); 
\node at (0,0) {\large{$H_{n+1}$}};

\draw (-3,1) -- (-1,1);
\draw (-1,1) -- (-1,-1);
\draw (-1,-1) -- (1,-1);
\draw (1,-1) -- (1,-3);
\node at (-3,.5) {\Large{$\calh$}};

\draw[color = black, line width = 2pt] (-2,2) -- (1,2);
\draw[color = black, line width = 2pt] (2,1) -- (2,-2);
\node at (-.5,2.3) {{$P_1$}};
\node at (2.3,-.5) {{$P_1$}};
\draw[color = black, fill = black] (-2,2) circle (.1cm);
\draw[color = black, fill = black] (1,2) circle (.1cm);
\draw[color = black, fill = black] (2,1) circle (.1cm);
\draw[color = black, fill = black] (2,-2) circle (.1cm);

\end{tikzpicture}$$$$
\hspace{5mm}
$$
\begin{tikzpicture}[color = black, line width = 1pt]

\draw[black] (-2,-2) rectangle (2,2); 
\node at (0,0) {\large{$H_{n+1}$}};

\draw (-3,1) -- (-1,1);
\draw (-1,1) -- (-1,-1);
\draw (-1,-1) -- (1,-1);
\draw (1,-1) -- (1,-3);
\node at (-3,.5) {\Large{$\calh$}};

\draw[color = black, line width = 2pt] (2,2) -- (1,2);
\draw[color = black, line width = 2pt] (1,-2) -- (2,-2);
\draw[color = black, line width = 2pt] (-2,2) -- (-2,1);
\draw[color = black, line width = 2pt] (2,2) -- (2,1);
\node at (-2.5,1.5) {{$P_2$}};
\node at (2.5,1.5) {{$P_2$}};
\node at (1.5,2.3) {{$P_2$}};
\node at (1.5,-2.3) {{$P_2$}};
\draw[color = black, fill = black] (2,2) circle (.1cm);
\draw[color = black, fill = black] (1,2) circle (.1cm);
\draw[color = black, fill = black] (2,-2) circle (.1cm);
\draw[color = black, fill = black] (1,-2) circle (.1cm);
\draw[color = black, fill = black] (-2,2) circle (.1cm);
\draw[color = black, fill = black] (-2,1) circle (.1cm);
\draw[color = black, fill = black] (2,1) circle (.1cm);

\end{tikzpicture}
$$
$$}
\end{center}
\caption{Boundary Segments on $H_{n+1}$}
\end{figure}
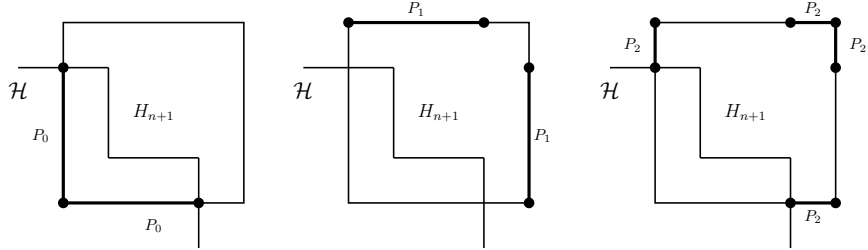

It is clear that no  $P_0$ segment contributes $\Delta p$, but what is also true is that
$P_1$ segments also contribute no net perimeter to $\Delta p$.

To see this, suppose $S$ is a $P_1$ segment and let $S^*$ be the mirror 
image of $S$ on $\partial \hn1$. Then $S^*
\subset P_0$ and consequently, there is a portion of $\partial\calh\cap\hn1$ that covers $S$ in its 
entirety and this portion of the perimeter of $\calh$ is no longer part of the boundary of $\hcup$. 
Thus, although every $P_1$ segment contributes to the boundary of $\hcup$, that addition
is balanced by a subtraction from $\partial\calh$. That is, $P_1$ segments create no net perimeter. 

Hence, only $P_2$ segments contribute net perimeter to $\Delta p$.  Since $P_2$ segments mirror 
each other on $\partial\hn1$,  any pair of $P_2$ segments of individual length $b$ will contribute $2b$ 
toward $\Delta p$. The region between a mirror pair of $P_2$ segments of length $b$ necessarily 
misses $\calh$ and has area $b$.  However, it is not necessarily true that this strip contributes an 
area of $b$ to $\Delta a$ as it is possible to have $P_2$ segments on both the horizontal and vertical 
sides of $H_{n+1}$.  In this case, the horizontal and vertical strips intersect. 
See Figure \ref{fig:intersecting_boundary_strips}.

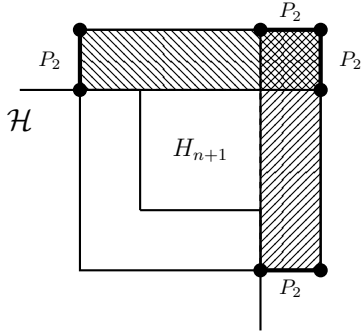
\begin{figure}[H]\label{fig:intersecting_boundary_strips}
\begin{center}
\scalebox{.8}{
$$
\begin{tikzpicture}[color = black, line width = 1pt]

\draw[black] (-2,-2) rectangle (2,2); 
\node at (0,0) {\large{$H_{n+1}$}};

\draw (-3,1) -- (-1,1);
\draw (-1,1) -- (-1,-1);
\draw (-1,-1) -- (1,-1);
\draw (1,-1) -- (1,-3);
\node at (-3,.5) {\Large{$\calh$}};

\draw[color = black, line width = 2pt] (2,2) -- (1,2);
\draw[color = black, line width = 2pt] (1,-2) -- (2,-2);
\draw[color = black, line width = 2pt] (-2,2) -- (-2,1);
\draw[color = black, line width = 2pt] (2,2) -- (2,1);
\node at (-2.5,1.5) {{$P_2$}};
\node at (2.5,1.5) {{$P_2$}};
\node at (1.5,2.3) {{$P_2$}};
\node at (1.5,-2.3) {{$P_2$}};
\draw[color = black, fill = black] (2,2) circle (.1cm);
\draw[color = black, fill = black] (1,2) circle (.1cm);
\draw[color = black, fill = black] (2,-2) circle (.1cm);
\draw[color = black, fill = black] (1,-2) circle (.1cm);
\draw[color = black, fill = black] (-2,2) circle (.1cm);
\draw[color = black, fill = black] (-2,1) circle (.1cm);
\draw[color = black, fill = black] (2,1) circle (.1cm);

\draw[dashed] (-2,1) -- (2,1);
\draw[dashed] (1,2) -- (1,-2);
\draw[pattern=north west lines] (-2,1) rectangle (2,2);
\draw[pattern=north east lines] (1,2) rectangle (2,-2);
\end{tikzpicture}
$$}
\end{center}
\caption{Intersecting Boundary Strips}
\end{figure}

Suppose there are $\alpha$ mirrored pairs of horizontal $P_2$ segments that have (individual) 
lengths $h_1, \dots, h_\alpha$. Likewise, suppose there are $\beta$ pairs of vertical $P_2$ segments 
that have lengths $v_1, \dots, v_\beta$ and set

$$
h = \sum_{i = 1}^{\alpha} h_i \ \ \mbox{   and   } \ \ v = \sum_{i = 1}^{\beta} v_i.
$$

By inclusion/exclusion, the contribution to $\Delta a$ of all of the strips is $h + v - hv$, and so
$$
\Delta a \geq h + v - hv \geq h + v - \left(\frac{h + v}{2}\right)^2.
$$

Since the $P_2$ segments are the only segments making a net positive contribution to $\Delta p$,
$\Delta p \leq 2(h+ v)$ and hence,
\begin{equation}\label{one}
\frac{\Delta p }{\Delta a} \leq \frac{2(h + v)}{h + v - hv}.
\end{equation}

Computing the maximum of (\ref{one}) it is easy matter to see that $\frac{2(h + v)}{h + v - hv} \leq 4$ when $h,v \in (0,1]$. In fact, (\ref{one}) only obtains its maximum value of $4$ when $h = v = 1$. This computation then completes the inductive proof and this section.
\end{proof}

\section{The Non-oriented Case} \label{sec:general_case}

\subsection{Need to Think Outside the Box}
The method of looking at $\frac{\Delta p}{\Delta a}$ when adding an $n+1$st square fails for non-oriented squares. Here is a simple counterexample.

\begin{example}
In the figure below, $H$ intersects $H_{n+1}$ covering all but one small triangle in the bottom left corner of $H_{n+1}$ with base and height $b$.

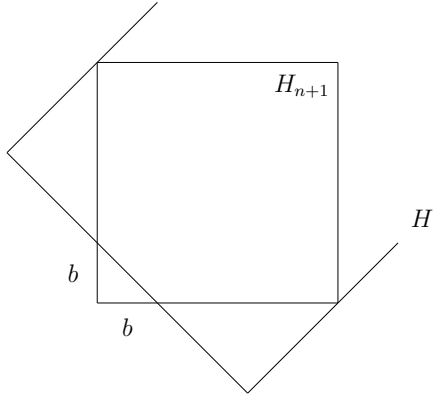
\begin{figure}[H]
\begin{center}
\scalebox{.8}{
$$\begin{tikzpicture}
\draw (-2,-2) -- (-2,2);
\draw (-2,2) -- (2,2);
\draw (2,2) -- (2,-2);
\draw (2,-2) -- (-2,-2);
\draw (1.4,1.6) node {\large{$H_{n+1}$}};

\draw (.5, -3.5) -- (3,-1);
\draw (-3.5, .5) -- (-1,3);
\draw (.5,-3.5) -- (-3.5, .5);

\draw (3.4, -.6) node {\large{$H$}};

\draw (-1.5, -2.4) node {\large{$b$}};
\draw (-2.4, -1.5) node {\large{$b$}};

\end{tikzpicture}$$}
\caption{A Counterexample}
\label{fig:counter_example}
\end{center}
\end{figure}

\noindent
Then, $\frac{\Delta p}{\Delta a}$ is easily computed.

$$\frac{\Delta p}{\Delta a}=\frac{2b-\sqrt{2}b}{b^2/2}=\frac{4-2\sqrt{2}}{b}$$

By selecting $b$ sufficiently small, $\frac{\Delta p}{\Delta a}$ may be made arbitrarily large. 
Therefore, the method of showing $\frac{\Delta p}{\Delta a} \leq 4$ when adding a square fails for 
non-oriented squares.
\end{example}

\subsection{Being Centered Helps}\label{sec:centered_squares}
\noindent

While this subsection does not directly address our problem, its result will be used in the following section. This result and proof are also presented in [\cite{GT}]. However, we are unsure if this paper is the first place this result appears. In any case, our presentation closely follows Z. Gyenes's.

\begin{theorem} \label{thm:centered_squares}
The perimeter to area ratio of $n$ unit squares centered on the same location is 4.
\end{theorem}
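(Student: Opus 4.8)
The plan is to exploit the fact that every one of the $n$ squares, being convex and containing the common center, is star-shaped about that center, and hence so is their union. Place the common center at the origin and let $\calh$ denote the union. First I would observe that, since each $H_i$ is convex with $0$ in its interior, $\calh$ is star-shaped about $0$ and $0$ lies in the interior of $\calh$; consequently every ray emanating from $0$ meets $\partial\calh$ in exactly one point, and because $\calh$ is a finite union of squares, $\partial\calh$ is a polygon whose boundary is the graph of a single-valued, piecewise-linear radial function.

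Next I would decompose $\calh$ into the triangles $T_e$ having apex $0$ and base $e$, as $e$ ranges over the edges of $\partial\calh$ (subdividing, as in Gyenes's proof, so that each $e$ lies on the edge of a single $H_i$). Star-shapedness gives $T_e \subseteq \calh$ for every $e$, since each segment from $0$ to a point of $e$ lies in $\calh$; and single-valuedness of the radial function shows that distinct $T_e$ subtend non-overlapping angular arcs at $0$, so the $T_e$ have pairwise disjoint interiors and cover $\calh$. Therefore $a(\calh)=\sum_e a(T_e)$.

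The key geometric input is now that each edge $e$ of $\partial\calh$ lies on an edge of some unit square $H_i$, and an edge of a unit square lies on a line at distance exactly $\tfrac12$ from that square's center, i.e. from the origin; in particular $T_e$ is non-degenerate, with base $|e|$ and corresponding height $\tfrac12$. Hence $a(T_e)=\tfrac12\cdot|e|\cdot\tfrac12=|e|/4$, and summing over all boundary edges yields $a(\calh)=\tfrac14\sum_e|e|=\tfrac14\,p(\calh)$, so $\frac{p(\calh)}{a(\calh)}=4$, exactly as claimed.

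I expect the only delicate step to be the rigorous justification that the triangles $T_e$ tile $\calh$ — precisely, deducing from "$0$ in the interior of $\calh$" and "$\calh$ star-shaped about $0$" that $\partial\calh$ is a single-valued piecewise-linear radial graph, so the arcs subtended by distinct boundary edges meet only at endpoints. Everything else is a one-line computation, and it is worth noting that the argument delivers the value $4$ on the nose, with no inequality left to optimize.
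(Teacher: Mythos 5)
Your argument is correct and is essentially the paper's own proof: decompose $\calh$ into triangles with apex at the common center and bases on the boundary edges, note each base lies on the side of a unit square centered at the origin so the height is $\tfrac12$, and sum to get $a(\calh)=\tfrac14 p(\calh)$. Your extra care about star-shapedness and the radial graph is a welcome justification of the tiling that the paper takes for granted, but it is the same approach.
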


\begin{proof}
Let $\calh$ be the union of squares where all component squares share the same center point.
We partition $\calh$ into disjoint triangles by adding line segments between the common center 
and vertices of the $H_i$'s  as well as the vertices created by the intersection of any two of the $H_i
$'s.  The height of any resulting triangle is $\frac{1}{2}$ since the base of any such triangle 
lies on the boundary of a single $H_i$. See Figure \ref{fig:centered_squares}. Summing the areas 
of all these triangles, we find that 
$a(\calh) = \frac{1}{4} p(\calh)$ and thus, $\frac{p(\calh)}{a(\calh)} = 4$.
\end{proof}

\begin{figure}[H]
\begin{center}
\scalebox{.75}{
$$\begin{tikzpicture}
\draw (-2,-2) -- (-0.8284,-2) -- (0,-2.8284) -- (0.8284,-2) -- (2,-2) -- (2,-0.8284) -- (2.8284,0) -- (2,0.8284) -- (2,2) --  (0.8284,2) -- (0,2.8284) -- (-0.8284,2) -- (-2,2) -- (-2, 0.8284) -- (-2.8284,0) -- (-2,-0.8284) -- cycle;

\draw[fill = gray] (0,0) -- (0,2.8284) -- (0.8284,2) -- cycle;

\draw[dashed] (0,0) -- (1.4142,1.4142) -- (0.8284,2);

\node at (1.0071,0.4071) {\large{$\frac{1}{2}$}};
\end{tikzpicture}$$
}
\end{center}

\caption{One Partitioning Triangle in a Figure with 2 Squares}
\label{fig:centered_squares}

\end{figure}
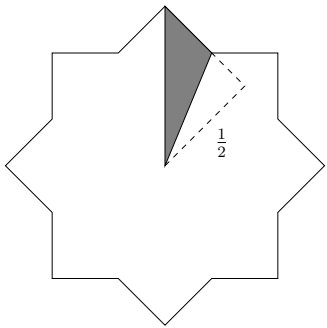

\section{Counterexample?  Pack Your Squares Tightly} \label{sec:counterexample}
\noindent
In this section, we examine the nature of a potential counterexample to the {\em PAC} and show 
that any possible counterexample is constrained in a strong way. We then prove 
a variant of Theorem \ref{thm:oriented_squares} where circles rather than squares are stacked.

Because of the finite nature of the problem, if a counterexample exists there will be a 
counterexample with a minimal number of squares. We focus our attention on this minimal 
counterexample. Throughout this section, $\calh$ will be a finite union of squares such that 
$\frac{p(\calh)}{a(\calh)} > 4$ and for any $i$,

\begin{displaymath}
\frac{p\left(\calh \setminus H_i\right)}{a\left(\calh \setminus H_i\right)} \leq 4.
\end{displaymath}

We refer to such a counterexample as {\it optimal}. The following theorem shows that in such an 
optimal counterexample, any individual component square must share a large portion of its area 
with the rest of the figure.

\begin{theorem} \label{thm:bound}
If $\calh=\bigcup_{i=1}^nH_i$ is an optimal counterexample, then for each $i\le n$, 
$a\left(H_i \cap \left (\calh \setminus H_i\right)\right) > \frac{\pi}{4}$.
\end{theorem}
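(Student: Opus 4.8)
The plan is to exploit optimality directly: if $\calh$ is an optimal counterexample then removing any single square $H_i$ brings the ratio back down to at most $4$, so $\calh$ is balanced on a knife's edge, and we can extract a quantitative inequality from the transition between $\calh \setminus H_i$ and $\calh$. Write $\calh' = \calh \setminus H_i$, and set $A' = a(\calh')$, $P' = p(\calh')$, so $P'/A' \le 4$, while $(P' + \Delta p)/(A' + \Delta a) > 4$, where $\Delta p = p(\calh) - P'$ and $\Delta a = a(\calh) - A'$ are the perimeter and area gained by putting $H_i$ back. Subtracting, $\Delta p > 4\,\Delta a + (4A' - P') \ge 4\,\Delta a$, so $\Delta p / \Delta a > 4$, i.e. adding the square $H_i$ to $\calh'$ increases perimeter faster than four times the area gained. (Here $\Delta a > 0$, since if $\Delta a = 0$ the square contributes nothing and $\calh$ would not be a counterexample.)

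Next I would bound $\Delta a$ below and $\Delta p$ above in terms of the ``new'' region $N := H_i \setminus \calh' = H_i \cap (\R^2 \setminus \calh')$, which is exactly the part of $H_i$ not already covered. We have $\Delta a = a(N)$. For the perimeter, the new boundary of $\calh$ is the portion of $\partial H_i$ lying outside $\calh'$ together with whatever arcs of $\partial\calh'$ get absorbed into the interior; in particular $\Delta p \le p(\partial H_i \setminus \calh') \le$ (a suitable perimeter-type quantity associated to $N$). The cleanest route is to observe that $N$ is an open subset of the unit square $H_i$, and $\Delta p$ is at most the length of $\partial N \cap \partial H_i$, which is in turn at most $p(N)$; more carefully, one shows $\Delta p \le p(N) - (\text{length of }\partial N \cap \operatorname{int} H_i)$, but for an upper bound $\Delta p \le p(N)$ suffices provided we are careful about how $N$ sits against $\partial H_i$. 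Combining, $p(N) \ge \Delta p > 4\,\Delta a = 4\,a(N)$, so the uncovered region $N$ inside the unit square $H_i$ satisfies $p(N)/a(N) > 4$.

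Now the isoperimetric inequality closes the argument: $N \subseteq H_i$ has $p(N)^2 \ge 4\pi\, a(N)$, hence
\[
4\pi\, a(N) \le p(N)^2 , \qquad p(N) > 4\, a(N),
\]
but this only gives $a(N) < p(N)^2/(4\pi)$ and $a(N) < p(N)/4$, which I must convert into an \emph{upper} bound on $a(N)$, hence (via $a(H_i \cap (\calh\setminus H_i)) = 1 - a(N)$) a lower bound of the desired form $1 - a(N) > \pi/4$, i.e. $a(N) < 1 - \pi/4$. The way to get this: from $p(N) > 4a(N)$ and the isoperimetric inequality $p(N)^2 \ge 4\pi a(N)$ one does \emph{not} immediately bound $a(N)$; instead I would use that $N$ is contained in the unit square, so $p(N) \le 4$ is false in general — rather, the relevant bound is that $N^c \cap H_i$ (the covered part) is large. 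The honest fix is: apply the isoperimetric-type reasoning to the \emph{complement} $C := H_i \setminus N = H_i \cap (\calh \setminus H_i)$ inside the square. One shows $p(C) \le p(N) + (\text{correction})$ and $p(C) \ge$ the isoperimetric bound $2\sqrt{\pi a(C)}$, and feeds $p(N) > 4a(N) = 4(1 - a(C))$ through the relation between $p(N)$ and $p(C)$ along $\partial H_i$.

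The main obstacle is precisely this last step: relating the perimeters of $N$ and of its complement $C$ within the unit square and invoking the isoperimetric inequality on the correct region so that $\pi/4$ — the area of a disk of radius $1/2$, i.e. the largest disk fitting in a unit square — emerges. I expect the clean statement to be: the covered region $C$ must contain, or have area at least that of, enough of $H_i$ that the uncovered region $N$ cannot be an efficient perimeter-generator; quantitatively, $N$ touches $\partial H_i$ in total length $\ge \Delta p > 4 a(N)$, and an extremal/isoperimetric comparison (the uncovered region with given boundary-on-$\partial H_i$ maximizing area is related to a circular segment) forces $a(N) \le 1 - \pi/4$, with equality approached by a quarter-disk-type configuration. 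Making this extremal comparison rigorous — rather than hand-waving it — is where the real work lies; the algebra before and after is routine.
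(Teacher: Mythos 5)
Your opening reduction is exactly the paper's: optimality gives $\frac{\Delta p}{\Delta a}>4$ for the square $H_i$ whose removal/re-addition is being considered, and the goal becomes contradicting this when the overlap area $\alpha=a\bigl(H_i\cap(\calh\setminus H_i)\bigr)$ is at most $\pi/4$. But the central step is missing. You bound $\Delta p$ by the length of $\partial H_i$ outside $\calh\setminus H_i$ and then try the isoperimetric inequality on the uncovered region $N=H_i\setminus\calh$, obtaining only $p(N)>4a(N)$, which (as you notice) points the wrong way and yields nothing. The fix you then gesture at --- pass to the covered region $C=H_i\cap(\calh\setminus H_i)$ --- is the right one, but you stop short of the accounting identity that makes it work, and you declare the remaining ``extremal comparison'' to be the real unfinished work. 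That comparison is not a delicate circular-segment analysis: the boundary of $C$ consists precisely of (i) the part of $\partial H_i$ covered by $\calh\setminus H_i$, of length $c$, and (ii) the interface inside $H_i$ with $\calh\setminus H_i$ (the perimeter revealed upon removing $H_i$), of length $r$; hence $\Delta p=(4-c)-r=4-p(C)$, not merely $\Delta p\le 4-c$. Your lossy bound discards the $-r$ term, and without it the isoperimetric inequality on $C$ cannot be brought to bear on $\Delta p$.

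Once the identity $\Delta p\le 4-p(C)$ is in hand, the paper finishes in two lines: the isoperimetric inequality gives $p(C)\ge 2\sqrt{\pi\alpha}$ (the square's boundary is allowed to count toward this enclosing length, so no relative/free-boundary version is needed), and $\Delta a=1-\alpha$, so
\[
\frac{\Delta p}{\Delta a}\le\frac{4-2\sqrt{\pi\alpha}}{1-\alpha},
\]
which is $\le 4$ exactly when $\alpha\le\pi/4$ (the inequality $4\alpha\le 2\sqrt{\pi\alpha}$), contradicting $\frac{\Delta p}{\Delta a}>4$ and forcing $\alpha>\pi/4$. Note also that your description of the extremal picture is off: equality in the final algebra corresponds to $\alpha=\pi/4$ with the covered region a full disk of radius $\frac12$ inscribed in $H_i$, not a quarter-disk configuration with $a(N)=1-\pi/4$. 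As written, your proposal identifies the correct target and the correct region to apply isoperimetry to, but leaves the decisive inequality $\Delta p\le 4-p(C)$ and the concluding computation unproved, so it does not yet constitute a proof.
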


\begin{proof}
Using the argument of the proof of Theorem \ref{thm:oriented_squares} in 
Section \ref{sec:bump_method}, we assume $\frac{\Delta p}{\Delta a} > 4$ 
when removing any square, $H_i$ from $\calh$.

We maximize $\frac{\Delta p}{\Delta a}$ for a fixed $\Delta a$.  To simplify notation, for a fixed $i$, 
define
$$
\alpha = a\left(H_i \cap \left(\calh \setminus H_i\right)\right)
$$
In the case of removing the square $H_i$, $\Delta a = 1- \alpha$
When we remove the square $H_i$ from $\calh$, a portion of the perimeter of $H_i$ is removed. 
However, some  of this perimeter might have been covered  by $\calh \setminus H_i$ and 
some  perimeter of $\calh\setminus H_i$ that was covered by $H_i$ might be revealed. 
Between the perimeter of $\calh \setminus H_i$ that is revealed and the perimeter of $H_i$ 
that was covered by $\calh$, there 
must be at least enough length to enclose the area that is left behind when $H_i$ is removed.

\smallskip
Thus, $\Delta p \leq 4 - x$ where $x$ is the minimal perimeter required to enclose an area of $
\alpha$ inside an unit square (if the square boundary is used to enclose the area, it contributes
to $x$).

The isoperimetric inequality states that, in general the minimum perimeter needed to enclose a fixed 
area in a plane is given by a circle. This fact holds 
provided a circle of area $\alpha$ can fit inside of the unit square.  It is easily shown that a circle of 
area $\alpha$ can fit inside a unit square so long as $\alpha \leq \frac{\pi}{4}$. Then, assuming $0 
\leq \alpha \leq \frac{\pi}{4}$,
$$
\frac{\Delta p}{\Delta a} \leq \frac{4 - 2\sqrt{\pi \alpha}}{1 - \alpha}
$$
From this it is easy to verify that $\frac{\Delta p}{\Delta a} \leq 4$ when 
$0 \leq \alpha \leq \frac{\pi}{4}$ and hence, $H_i$ must share more than $\frac{\pi}{4}$ 
of its area with $H$, as claimed.
\end{proof}

One generalization of the {\em PAC} is to examine the union of finitely copies of different fixed sets. The 
method just used gives a proof of the {\em PAC} for circles.

\begin{corollary} \label{cor:circles}
The perimeter to area ratio of the union of finitely many circles of radius 1 in the plane does not 
exceed 2.
\end{corollary}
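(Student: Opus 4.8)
The plan is to mimic the structure of the proof of Theorem \ref{thm:bound}: take an optimal counterexample (one with the fewest circles whose perimeter-to-area ratio exceeds $2$, and all of whose proper subfigures satisfy the bound), remove one circle $C_i$, and bound $\frac{\Delta p}{\Delta a}$ from above. If we can show $\frac{\Delta p}{\Delta a}\le 2$ whenever the circle is not disjoint from the rest, and the disjoint case gives exactly $2$, then the ratio of the whole figure cannot exceed $2$, a contradiction. So the entire argument reduces to the single-circle removal estimate, exactly as in the square case.

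First I would set up the removal of a unit-radius circle $C_i$ from $\calh$, writing $\alpha = a\!\left(C_i\cap(\calh\setminus C_i)\right)$ for the overlap area, so that $\Delta a = \pi - \alpha$ (here $a(C_i)=\pi$ since the radius is $1$, and the perimeter of a single such circle is $2\pi$, giving ratio exactly $2$). Next, as in Theorem \ref{thm:bound}, I would argue that $\Delta p \le 2\pi - x$, where $x$ is the least perimeter needed to enclose a region of area $\alpha$ inside a disk of radius $1$: when $C_i$ is deleted, the perimeter that disappears is part of $\partial C_i$ (total length $2\pi$), while whatever boundary is revealed plus whatever of $\partial C_i$ was interior together must bound the leftover region of area $\alpha$. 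The isoperimetric inequality then forces that bounding curve to have length at least $2\sqrt{\pi\alpha}$, provided a disk of area $\alpha$ fits inside the unit-radius disk — which holds precisely when $\alpha \le \pi$, i.e. always, since $\alpha$ is at most the area $\pi$ of $C_i$. Hence $\Delta p \le 2\pi - 2\sqrt{\pi\alpha}$ and
\[
\frac{\Delta p}{\Delta a} \le \frac{2\pi - 2\sqrt{\pi\alpha}}{\pi - \alpha}.
\]

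The final step is the elementary calculation that this last quantity never exceeds $2$ for $0\le\alpha\le\pi$: clearing denominators, the claim $2\pi - 2\sqrt{\pi\alpha} \le 2(\pi-\alpha)$ is $\alpha \le \sqrt{\pi\alpha}$, i.e. $\alpha^2 \le \pi\alpha$, i.e. $\alpha\le\pi$, which is exactly the constraint we already have; equality only at $\alpha=0$ or $\alpha=\pi$. This contradicts $\frac{\Delta p}{\Delta a}>2$, so no optimal counterexample exists and the bound $2$ holds in general.

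The main obstacle — the same subtlety as in Theorem \ref{thm:bound} — is justifying $\Delta p \le 2\pi - x$ rigorously: one must check that the perimeter bookkeeping (perimeter of $C_i$ lost, perimeter of $\calh\setminus C_i$ revealed, cancellations along shared arcs) genuinely produces a closed curve of length at least $x$ enclosing the leftover area, and that the isoperimetric comparison is legitimate because a disk of the relevant area fits inside the unit disk. Here the circle case is actually \emph{cleaner} than the square case: the "fits inside" condition $\alpha\le\pi$ is automatic rather than a genuine restriction, so no extra hypothesis on $\alpha$ is needed and the inequality $\frac{\Delta p}{\Delta a}\le 2$ comes out tight at both endpoints with no wasted slack.
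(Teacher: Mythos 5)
Your proposal is correct and follows the paper's approach exactly: the paper's proof of Corollary \ref{cor:circles} simply says to apply the argument of Theorem \ref{thm:bound}, which is precisely what you do, including the key observation that for unit disks the ``fits inside'' condition $\alpha\le\pi$ is automatic, so the removal estimate $\frac{\Delta p}{\Delta a}\le 2$ holds unconditionally and the minimal-counterexample induction closes. Your bookkeeping $\Delta p \le 2\pi - x$ and the computation $\frac{2\pi-2\sqrt{\pi\alpha}}{\pi-\alpha}\le 2$ match the paper's intended argument at the same level of rigor.
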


\begin{proof}
Applying the exact same argument as in the proof of Theorem \ref{thm:bound} shows the result.
\end{proof}

\section{Conclusion} \label{sec:conclusion}
Keleti's {\it{Perimeter to Area Conjecture}} is particularly intriguing in a number of ways. First, a full generalization to convex sets is not true so that  any appropriate generalized version must involve a parsing of the convex sets in some, presumably geometric way. But what this might be is a mystery. Second is the simplicity of the conjecture itself. The fact that this conjecture is not settled seems to show that we are missing some important geometric fact concerning unions of square regions. Finally, it is interesting that disparate approaches to the problem can provide new clues. For example, the method of inclusion/exclusion yields the result that the "virgin" area of each square of an optimal counterexample  can be no more than $1-\pi/4$.  

In a subsequent paper, \cite{JAA} we study the differentiability properties of both the perimeter and area functions of a union of n unit squares in the plane.

\vspace{.4in}
\begin{small}
\noindent 
Paul D. Humke, \\
 {\sc  Department of Mathematics, St. Olaf College\\  
Northfield, MN 55057
 email: {\tt humkep@gmail.com}
 }
 
\medskip\noindent 
Cameron Marcott, \\
 {\sc  Department of Mathematics, St. Olaf College\\  
Northfield, MN 55057
 email: {\tt cam.marcott@gmail.com}
 }
 
\medskip\noindent 
Bjorn Mellem, \\
 {\sc  Department of Mathematics, St. Olaf College\\  
Northfield, MN 55057
 email: {\tt contactatrius@hotmail.com}
 }
 
\medskip\noindent 
Cole Stiegler, \\
 {\sc  Department of Mathematics, St. Olaf College\\  
Northfield, MN 55057
 email: {\tt cole.stiegler@gmail.com}
 }

\end{small}


\begin{thebibliography}{1}

\bibitem{B}
K‡roly Bezdek and Robert Connelly, {Pushing disks apart Ñ the kneserÐpoulsen conjecture in the plane}, {\em J. Reine Angew. Math}, 53:221--236, 2001.

\bibitem{Bo}
B\'ela Bollob\'as,
Area of the union of disks,
{\em Elem. Math.}, 23:60--61, 1968.

\bibitem{CE}
Ho-Lun Cheng and Herbert Edelsbrunner,
 Area and perimeter derivatives of a union of disks.
 In Rolf Klein, Hans-Werner Six, and Lutz Wegner, editors, {\em
  Computer Science in Perspective}, pages 88--97. Springer-Verlag New York,
  Inc., New York, NY, USA, 2003.

\bibitem{GT}
Zolt\'an Gyenes,
 The ratio of the surface-area and volume of finite union of copies of
  a fixed set in $\mathbb{R}^n$, 2005,

 \tt{http://www.cs.elte.hu/~dom/z.pdf}
 
\bibitem{GA}
Zolt\'{a}n Gyenes,
 The ratio of the perimeter and the area of unions of copies of a
  fixed set,
 {\em Discrete and Computational Geometry}, 45(3):400--409, 2011.

  \tt{www.springerlink.com/content/h4706266q6rr4m37/fulltext.pdf}

\bibitem{JAA}
Paul Humke, Cameron Marcott, Bjorn Mellem, and Cole Stiegler,
 Differentiation properties of the perimeter to area ratio for
  finitely many overlapped unit squares,
{\em Involve}, 2015.

\bibitem{K}
Tam\'{a}s Keleti,
 A covering property of some classes of sets in {${\mathbb{R}^n}$},
 {\em Acta Universitatis Carolinae-Mathematica et Physica},
  39(1-2):111--118, 1998.

\bibitem{S1988}
Schweitzer mikl\'{o}s matematikai eml\'{e}kverseny, 1998.

  \tt{www.math.u-szeged.hu/~mmaroti/schweitzer/schweitzer-1998.pdf}
  
\end{thebibliography}
\end{document}